\journal{\empty}
\setlist[enumerate]{leftmargin=.5in}
\setlist[itemize]{leftmargin=.5in}
\newcommand{\reals}{\mathbb{R}}
\newcommand{\q}{\mathbb{Q}}
\newcommand{\cm}[1]{\operatorname{CM}_{#1}}
\newcommand{\grobner}{Gr\"obner}
\newcommand{\res}[3]{\operatorname{Res}(#1, #2, #3)}
\newcommand{\amat}[1]{\operatorname{\mathcal A}(#1)} 
\newcommand{\smat}[1]{\operatorname{\mathcal S}_{#1}} 
\newcommand{\cres}[3]{\operatorname{CRes}(#1, #2, #3)}
\newtheorem{theorem}{Theorem}
\newdefinition{definition}{Definition}
\newtheorem{lemma}{Lemma}
\newdefinition{mproblem}{Main Problem}
\newdefinition{example}{Example}
\newdefinition{problem}{Open Problem}
\begin{document}
	
\begin{frontmatter}
	\title{Faster algorithms for circuits in the Cayley-Menger algebraic matroid}
	
	\author{Goran Mali\'c}
	\address{Computer Science Department, Smith College, Northampton, MA, USA}
	\ead{gmalic@smith.edu}
	
	\author{Ileana Streinu\corref{cor1}}
	\address{Computer Science Department, Smith College, Northampton, MA, USA}
	\ead{istreinu@smith.edu}
	
\cortext[cor1]{Corresponding author}

\begin{abstract}
A classical problem in Distance Geometry, with multiple practical applications (in molecular structure determination, sensor network localization etc.) is to find the possible placements of the vertices of a graph with given edge lengths.  For minimally rigid graphs, the double-exponential \grobner \ Bases algorithm with an elimination order can be applied, in theory, but it is impractical even for small instances. By relating the problem to the computation of circuit polynomials in the Cayley-Menger ideal, \cite{malic:streinu:CombRes:socg:2021} recently proposed an algebraic-combinatorial approach and an elimination algorithm for circuit polynomials. It is guided by a tree structure whose leaves correspond to complete $K_4$ graphs and whose nodes perform algebraic resultant operations. 
  
In this paper we uncover further combinatorial structure in the Cayley-Menger algebraic matroid that leads to an extension of their algorithm. In particular, we generalize the combinatorial resultant operation of \cite{malic:streinu:CombRes:socg:2021} to take advantage of the non-circuit generators and irreducible polynomials in the Cayley-Menger ideal and use them as leaves of the tree guiding the elimination. Our new method has been implemented in Mathematica and allows previously un-obtainable calculations to be carried out. In particular, the $K_{3,3}$-plus-one circuit polynomial, with over one million terms in $10$ variables and whose calculation crashed after several days with the previous method of \cite{malic:streinu:CombRes:socg:2021}, succeeded now in approx. 30 minutes.
\end{abstract}

\begin{keyword}
  Cayley-Menger ideal, rigidity matroid, circuit polynomial, combinatorial resultant, inductive construction, Gr\"obner basis elimination
\end{keyword}
	
\end{frontmatter}

\section{Introduction.}
\label{app:sec:introduction}

Given a graph $G=(V,E)$ and a collection of edge weights $\{ w_e | e\in E \}$, the main problem in Distance Geometry \cite{crippen:havel:distanceGeometry:1988,lavor:liberti:distanceGeometry:2017,alfakih:distanceMatrices:2018} asks for \emph{placements} of the vertices $V$ in some Euclidean space (in this paper, 2D) in such a way that the resulting edge lengths match the given weights. The problem has many applications, in particular in molecular structure determination \cite{emiris:mourrain:molConformations:1999,mucherino:lavor:liberti:maculan:distanceGeometry:2013} and sensor network localization
\cite{so:ye:theorySemidefinite:2005}.
 A polynomial-time related problem is to find the possible values of a \emph{single unknown distance} corresponding to a \emph{non-edge} (a pair of vertices that are not connected by an edge in $G$). A system of quadratic equations can be easily set up so that the possible values of the unknown distance are the (real) solutions of this system. In theory, the general but impractical (double-exponential time) Gr\"obner basis algorithm with an elimination order can be used to solve the system. The set of solutions can be finite (if the given weighted graph is \emph{rigid}) or continuous (if it is \emph{flexible}). 

By formulating the \emph{Single Unknown Distance Problem} in Cayley coordinates (with variables $x_{ij}$ corresponding to edges $ij\in E$),  the problem can be reduced to finding a specific irreducible polynomial in the Cayley-Menger ideal, called the  \emph{circuit polynomial}. The unknown distance is then computed by solving the uni-variate polynomial obtained by substituting the given edge lengths in the circuit polynomial.

The topic of this paper is the effective computation of \emph{circuit polynomials in the 2D Cayley-Menger ideal.} These are multi-variate polynomials in the ideal generated by the rank conditions of the Cayley matrix associated to a generic set of $n$ points. The supports of such polynomials correspond to a family of sparse graphs called \emph{rigidity circuits}, i.e. \emph{circuits in the (combinatorial) rigidity matroid}. They are particularly difficult to compute, as the number of terms in a circuit polynomial grows very rapidly as a function of the number of nodes in the underlying graph.

\subparagraph{Background and related work.} The literature on distance geometry is vast, with classical results   \cite{blumenthal:distanceGeometry:1953,crippen:havel:distanceGeometry:1988} (some going back to Cayley in the 19th century), recent applications \cite{streinu:borcea:numberEmbeddings:2004,mucherino:lavor:liberti:maculan:distanceGeometry:2013,lavor:liberti:distanceGeometry:2017} and new techniques emerging from various areas: theory of semidefinite programming, distance matrices, rigidity theory, combinatorics and computational algebra \cite{alfakih:distanceMatrices:2018,so:ye:theorySemidefinite:2005,sitharam:convexConfigSpaces:2010,emiris:tsigaridas:varvitsiotis:mixedVolume:2013,capco:schicho:realizations:2017,bartzos:emiris:etAl:realizations:2021}. Our paper relies on insights from the theory of matroids \cite{Oxley:2011,VDWmoderne}, as well as from the distinguished properties of circuit polynomials in algebraic matroids \cite{dressLovasz}. A computational perspective to circuit polynomials, recently popularized by \cite{rosen:sidman:theran:algebraicMatroidsAction:2020}, emerged from \cite{rosen:thesis}. There, circuits in algebraic matroids of \emph{arbitrary} polynomial ideals were studied and  small cases were explored using Macaulay2 code \cite{rosen:GitHubRepo}. They also pointed out the smallest circuit polynomial example in the context of 2D distance geometry. This polynomial was the only known example until last year: it is supported by the $K_4$ graph and, being a generator of the Cayley-Menger ideal, needs no calculation. 

Computing other circuit polynomials would require the double-exponential time \grobner \ Bases algorithm with an elimination order. The longest successful calculation carried out in this fashion (5 days and 6 hrs, using Mathematica's GroebnerBases function and some combinatorial considerations) was reported in \cite{malic:streinu:CombRes:socg:2021} for the Desargues-plus-one circuit graph from \cref{fig:desarguesAndK33}(left). 

The starting point of our paper is a recent result of \cite{malic:streinu:CombRes:socg:2021}, who proposed a mixed combinatorial-algebraic approach that led to faster calculations of non-trivial examples. This algorithm is guided by a tree structure whose leaves correspond to complete $K_4$ graphs and whose nodes correspond to rigidity circuits obtained by a combinatorial operation abstracted from the classical algebraic (Sylvester) resultant. This method, best described in the language of algebraic matroids, uncovers a rich algebraic-combinatorial structure in the Cayley-Menger ideal. The resulting algorithm significantly outperforms the general method: the previous example that took over 5 days to compute with GroebnerBases was completed in less than 15 seconds. 

\begin{figure}[ht]
	\centering
	\includegraphics[width=0.24\textwidth]{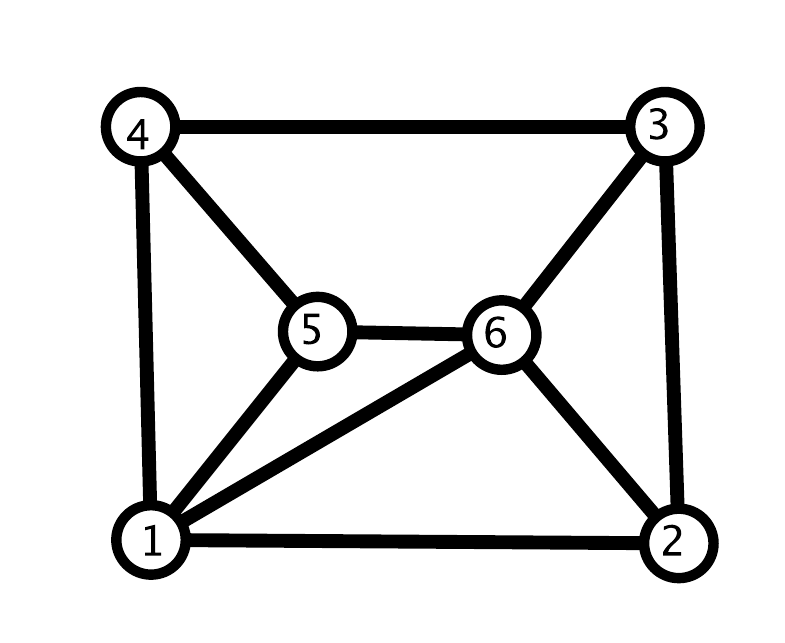}
	\includegraphics[width=0.24\textwidth]{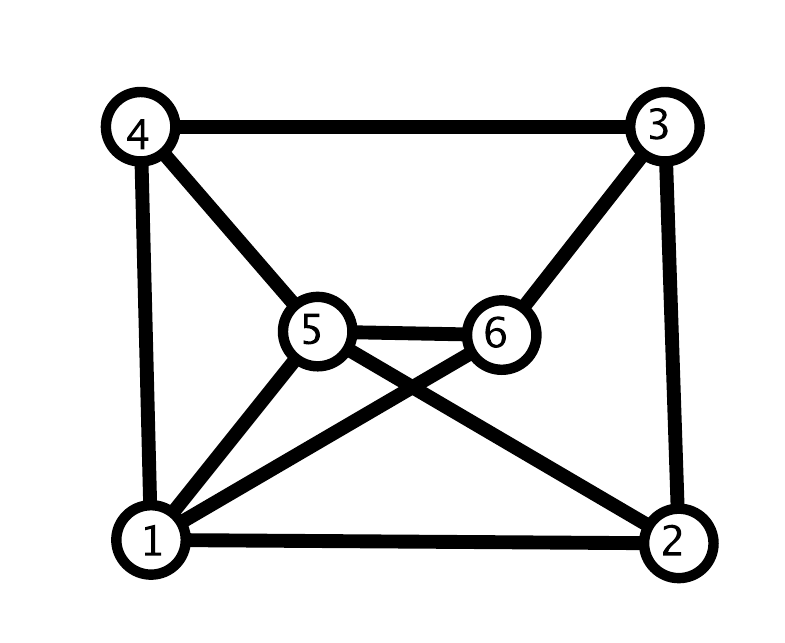}
	\vspace{-12pt}
	\caption{The Desargues-plus-one and the $K_{3,3}$-plus-one rigidity circuits.}
	\label{fig:desarguesAndK33}
\end{figure}

\subparagraph{Our Results.} 
In this paper we generalize the \emph{combinatorial resultant tree} introduced in \cite{malic:streinu:CombRes:socg:2021} from circuits to dependent sets in the rigidity matroid. This allows us to extend the algorithm of \cite{malic:streinu:CombRes:socg:2021} to take advantage of the other non-circuit generators of the Cayley-Menger ideal besides those supported on $K_4$ graphs. We show that these extended trees can lead to more tractable algebraic calculations than those based exclusively on circuits. Implemented in Mathematica, executions that were prohibitively expensive using the previous method now ran to completion. In particular, the polynomial for the $K_{3,3}$-plus-one circuit (\cref{fig:desarguesAndK33}, right), with over one million terms in $10$ variables and whose calculation crashed after several days with the method of \cite{malic:streinu:CombRes:socg:2021}, succeeded now in approx. 30 minutes.
 
\subparagraph{Overview of the paper.} In \cref{sec:prelimCombinatorial} we briefly introduce the necessary discrete structures from 2D rigidity theory, matroids and combinatorial resultants. The extended \emph{combinatorial resultant tree} is defined and characterized in \cref{sec:extendedCombRes}. A brief introduction to algebraic matroids, elimination ideals and resultants, the Cayley-Menger ideal and its circuit polynomials appears in \cref{sec:prelimAlgebraic}. We discuss the dependent, non-circuit generators of the Cayley-Menger ideal in \cref{sec:generatorsCM}. To help build the intuition on how our algorithm works, we demonstrate in \cref{sec:computingK33} how we succeded in computing the $K_{3,3}$-plus-one circuit polynomial. The relevant step of the extended algorithm for computing circuit polynomials is described in \cref{sec:algebraicNew}. We conclude with a few remaining open questions  in \cref{sec:concludingRemarks}.

\section{Preliminaries: rigid graphs and matroids.}
\label{sec:prelimCombinatorial}

In this section we introduce the essential, previously known concepts and results 
from combinatorial rigidity theory in 2D and from \cite{malic:streinu:CombRes:socg:2021} that are relevant for setting up the foundation of our paper and carrying out the proofs. We work with (sub)graphs $G$ given by subsets $E$ of edges of the complete graph $K_n$ on vertices $[n]:=\{1,\cdots,n\}$. We denote with $V(G)$, resp. $E(G)$ the vertex, resp. edge set (support) of $G$. Let $V(E)$ be the \emph{vertex span} of edges $E$, i.e. the set of all vertices appearing in $E$. A subgraph $G$ is \emph{spanning} if its edge set $E(G)$ spans $[n]$. The \emph{neighbours} $N(v)$ of a vertex $v$ are the vertices connected to $v$ by an edge in $G$.

\subparagraph{Frameworks.} A \emph{2D bar-and-joint framework} is a pair $(G,p)$ of a graph $G=(V,E)$ and a planar point set $p$. Its vertices $V=[n]$ are mapped to points $p=\{ p_1,\dots,p_n\}$ in $\mathbb R^2$ via the \emph{placement map} $p\colon V\to\mathbb R^2$ given by $i\mapsto p_i$. We view the edges as \emph{rigid bars} and the vertices as \emph{rotatable joints} which allow the framework to deform continuously as long as the bars retain their original lengths. The \emph{realization space} of the framework is the set of all its possible placements in the plane with the same bar lengths. Two realizations are congruent if they are related by a planar isometry. The \emph{configuration space} of the framework is made of congruence classes of realizations. The \emph{deformation space of a given framework $(G,p)$} is the  connected component of the configuration space that contains this particular placement (given by $p$). A framework is \emph{rigid} if its deformation space consists in exactly one configuration, and \emph{flexible} otherwise.

Combinatorial rigidity theory of bar-and-joint frameworks seeks to understand the rigidity and flexibility of frameworks in terms of their underlying graphs.  The following theorem  \cite{laman:rigidity:1970} relates the rigidity of 2D bar-and-joint frameworks to a specific type of graph sparsity: the hereditary property (b) below is also referred to as the \emph{$(2,3)$-sparsity condition}. 

\begin{theorem}
	\label{thm:laman}
	A bar-and-joint framework $(G,p)$ is \emph{generically} minimally rigid in 2D iff its underlying graph $G=(V,E)$ satisfies two conditions: (a) it has exactly $|E|=2|V|-3$ edges, and (b) any proper subset $V'\subset V$ of vertices spans at most $2|V'|-3$ edges.
\end{theorem}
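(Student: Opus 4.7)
The plan is to route the proof through the \emph{rigidity matrix} formulation of generic rigidity. For a placement $p\colon V\to\reals^2$ I would introduce the $|E|\times 2|V|$ matrix $R(p)$ whose row indexed by $ij\in E$ has $p_i-p_j$ in the block of columns for vertex $i$, $p_j-p_i$ in the block for $j$, and zeros elsewhere. A standard argument identifies the kernel of $R(p)$ with the space of infinitesimal motions, which always contains the $3$-dimensional space of planar isometries; consequently, for generic $p$, the framework $(G,p)$ is rigid iff $\rank R(p)=2|V|-3$, and generically minimally rigid iff additionally the rows of $R(p)$ are linearly independent at a generic $p$. This reduces the theorem to a purely combinatorial characterization of the bases of the linear matroid of $R(p)$ at a generic $p$, i.e.\ the generic $2$-dimensional rigidity matroid.

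For necessity, assuming $(G,p)$ is generically minimally rigid, row-independence together with full rank $2|V|-3$ in a $2|V|$-column matrix forces $|E|=2|V|-3$, giving (a). For (b), I would restrict $R(p)$ to the rows indexed by edges with both endpoints in some $V'\subsetneq V$: the resulting submatrix still has a kernel containing the $3$-dimensional space of planar isometries of the $V'$-subplacement, so independence of those rows forces their number to be at most $2|V'|-3$, establishing the hereditary sparsity bound.

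The sufficiency direction is where the genuine work lies, and my approach would be \emph{Henneberg's inductive construction}. The first step is a purely combinatorial lemma asserting that any graph $G$ satisfying (a) and (b) admits either a \emph{Henneberg-I reduction} (removing a vertex of degree~$2$) or a \emph{Henneberg-II reduction} (removing a degree-$3$ vertex $v$ and inserting a new edge between two of $v$'s three neighbors) yielding a smaller graph that is again $(2,3)$-tight. Existence of such a reduction follows from a degree-count argument: $2|E|=4|V|-6$ forces the minimum degree to be $2$ or $3$, and condition (b) is precisely what guarantees that a valid Henneberg-II completion exists in the degree-$3$ case. Inductively, $K_2$ is trivially rigid, and Henneberg-I clearly preserves generic rigidity, since attaching a new degree-$2$ vertex at a generic location to a generically rigid framework is generically rigid. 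The principal obstacle is Henneberg-II: one must show that if the reduced graph $G'$ is generically rigid, then at a generic placement of the new vertex $v$, the rigidity matrix of the larger graph $G$ attains full rank $2(|V|+1)-3$. I would handle this by a perturbation/rank argument on the three rows of $R(p)$ corresponding to the edges incident to $v$, showing that the old edge deleted by the reduction can be replaced by two of these three new rows without loss of rank — this is the technical heart of Laman's original proof.
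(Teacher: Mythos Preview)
The paper does not actually prove this theorem: it is stated as a classical result and attributed to Laman via the citation \cite{laman:rigidity:1970}, with no argument given. So there is no ``paper's own proof'' to compare against; the theorem is treated as background.

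Your sketch follows the standard route to Laman's theorem --- the rigidity-matrix formulation for necessity, and Henneberg's inductive construction for sufficiency --- and is essentially correct as an outline. A couple of small remarks. First, your base case $K_2$ is fine for rigidity but, as the theorem is literally stated here (with no lower bound on $|V'|$), a one-vertex subset already violates (b); this is the usual harmless defect in the statement, customarily repaired by restricting to $|V'|\geq 2$. Second, the genuinely delicate step you flag --- that in the Henneberg-II case one can always choose \emph{which} pair of neighbours of the degree-$3$ vertex to reconnect so that the smaller graph remains $(2,3)$-tight --- is exactly where the sparsity hypothesis (b) is used in a nontrivial way, and a full proof would need to spell out why at least one of the three candidate edges avoids creating an overdense subgraph. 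Your sketch acknowledges this but does not carry it out; since the paper itself defers the whole theorem to the literature, that level of detail is appropriate here.
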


The \emph{genericity} condition appearing in the statement of this theorem refers to the vertex placement $p$. Without going into details, we retain its most important consequence, namely that \emph{small perturbations of $p$ do not change the rigidity or flexibility properties of a generic framework.} This theorem allows for the rigidity and flexibility of generic frameworks to be studied in terms of only their underlying graphs. 

\begin{figure}[ht]
	\centering
		\includegraphics[width=.24\textwidth]{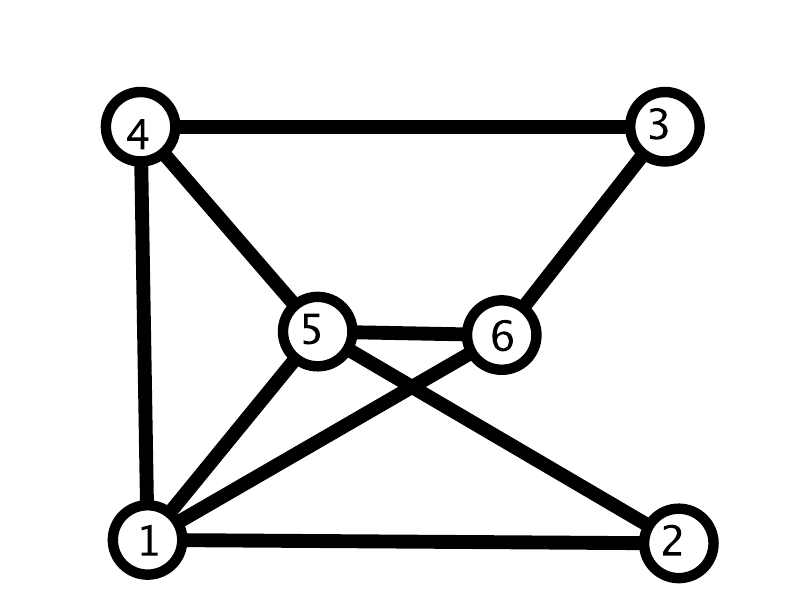}
		\includegraphics[width=.24\textwidth]{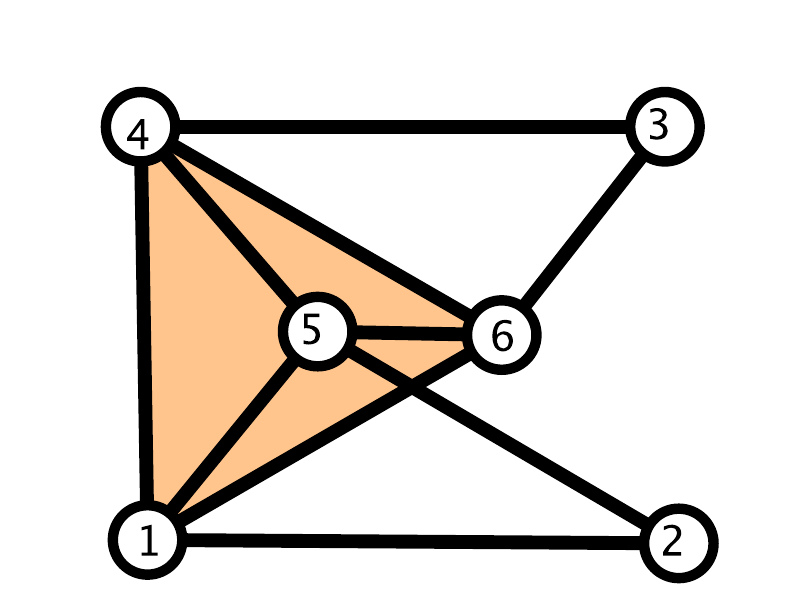}
		\includegraphics[width=.24\textwidth]{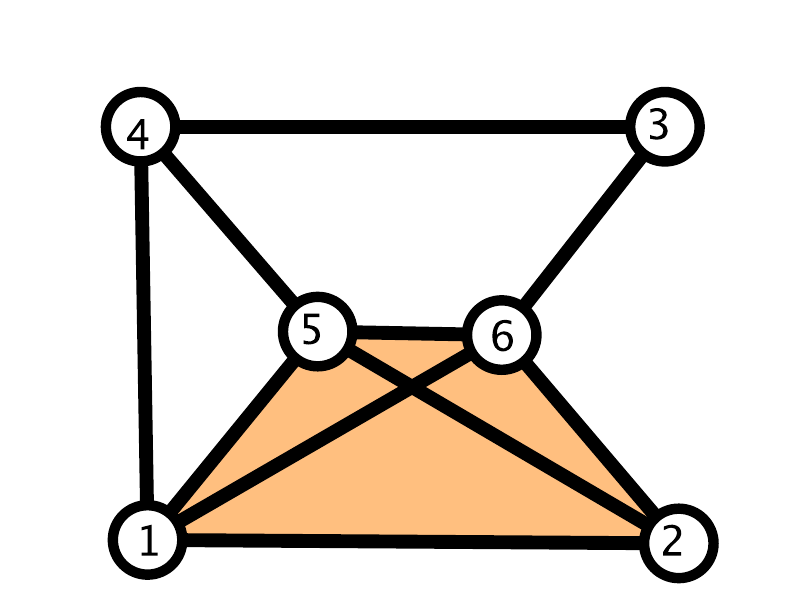}
		\includegraphics[width=.24\textwidth]{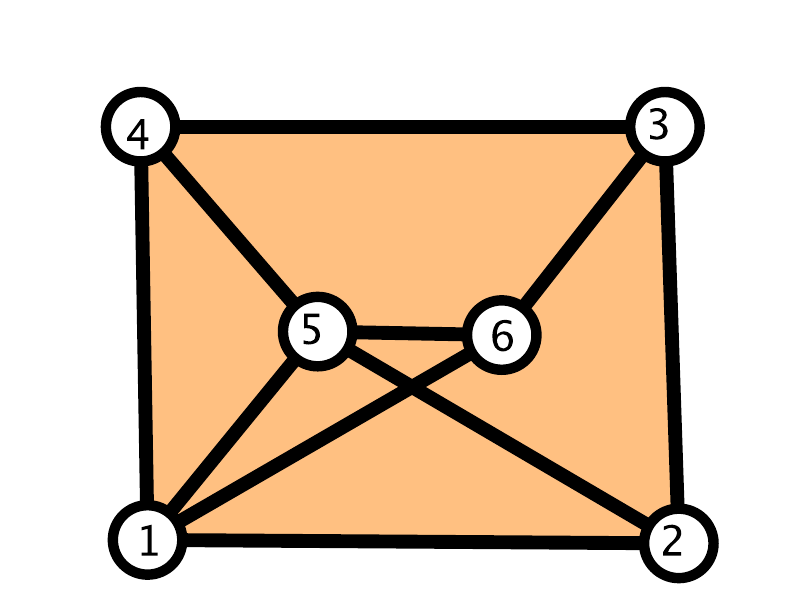}
		\vspace{-4pt}
	\caption{A Laman (maximally independent) graph (left), and three dependent (overconstrained) Laman-plus-one graphs obtained by adding a single edge to it. Each contains a unique circuit (highlighted), and only the rightmost one is a spanning circuit. In the two middle dependent graphs, the edges spanned by the circuit are redundant, and the others are critical.
	}
	\label{fig:lamanAndCircuits}
\end{figure}
A graph satisfying the conditions of this theorem is called a \emph{Laman graph}. It is \emph{minimally rigid} in the sense that it has just enough edges to be rigid: if one edge is removed, it becomes \emph{flexible.} Adding extra edges to a Laman graph keeps it rigid, but the minimality is lost: these graphs are said to be rigid and \emph{overconstrained}. In short, for a graph to be rigid, its vertex set must span a Laman graph; otherwise the graph is flexible. A flexible graph decomposes into edge-disjoint rigid components, each component spanning on its vertex set a Laman graph with (possibly) additional edges. \cref{fig:lamanAndCircuits} and \cref{fig:redundantCritical} illustrate the concepts. These properties have matroidal interpretations.

\begin{figure}[ht]
	\centering
		\includegraphics[width=.24\textwidth]{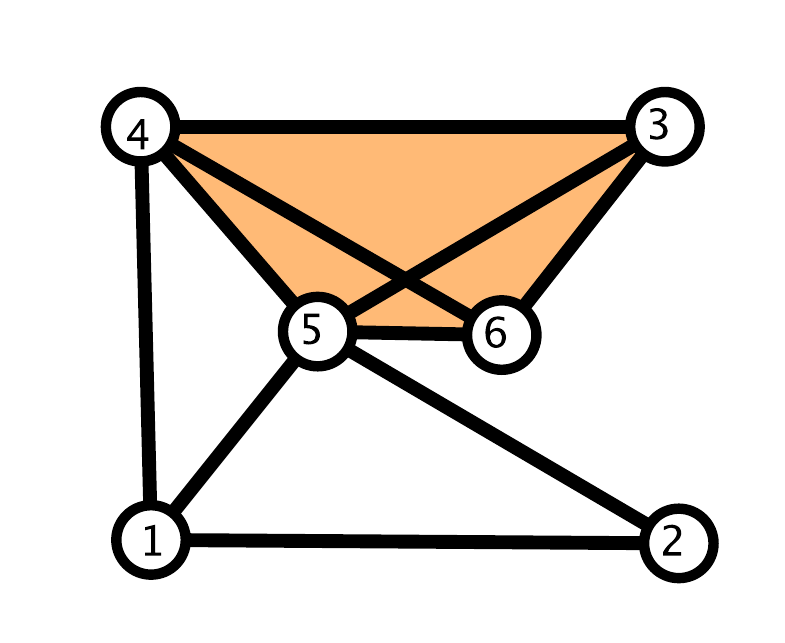}
		\includegraphics[width=.24\textwidth]{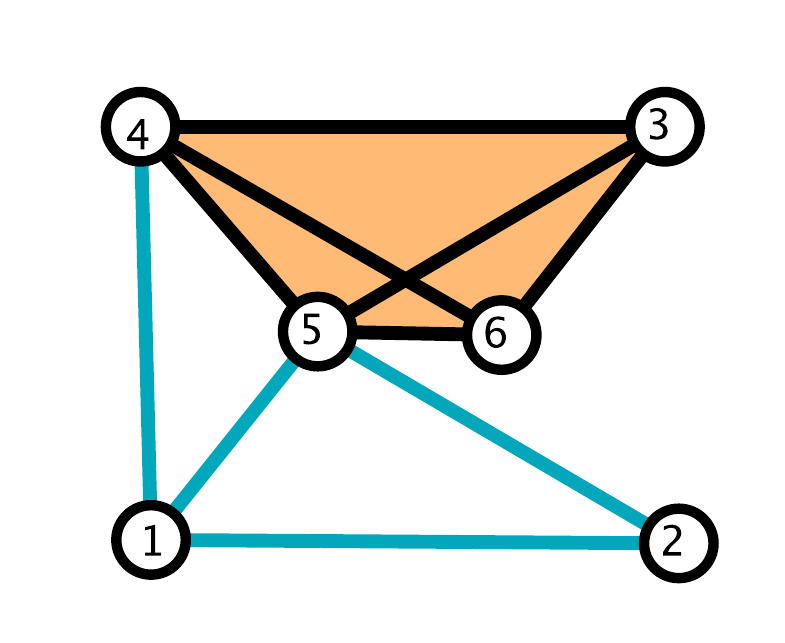}
		\includegraphics[width=.24\textwidth]{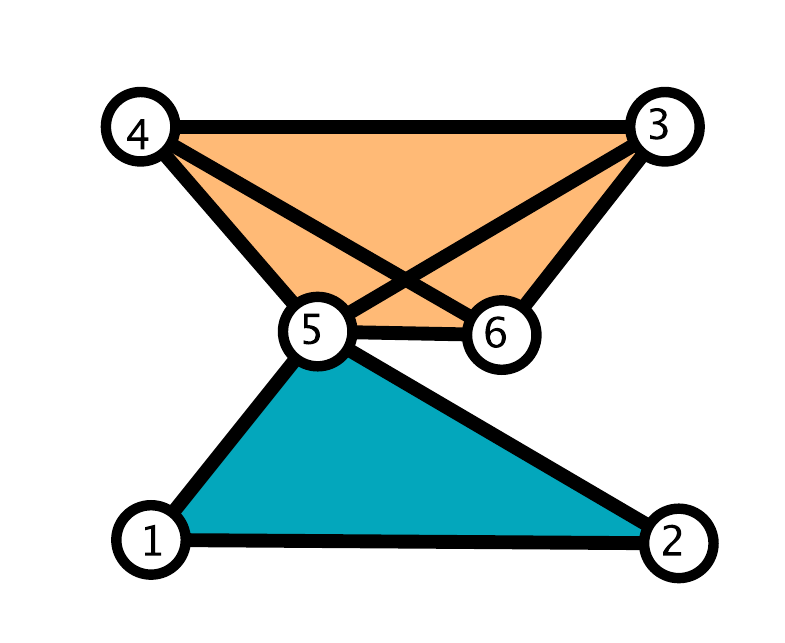}
		\vspace{-4pt}
	\caption{(Left) A dependent graph containing a circuit (highlighted). (Middle) Its redundant edges are in black and its critical edges are colored green. (Right) The removal of a critical edge creates a flexible graph, whose two edge-disjoint rigid components are highlighted: orange (dependent) and green (independent). 
	}
	\vspace{-10pt}
	\label{fig:redundantCritical}
\end{figure}

\subparagraph{Matroids. }  A matroid \cite{Oxley:2011} is an abstraction capturing (in)dependence relations among collections of elements from a \emph{ground set}, and is inspired by both \emph{linear} dependencies (among, say, rows of a matrix) and by \emph{algebraic} constraints imposed by algebraic equations on a collection of otherwise free variables. The standard way to specify a matroid is via its \emph{independent sets}, which have to satisfy certain axioms \cite{Oxley:2011} (skipped here, since they are not relevant for our presentation). A \emph{base} is a maximal independent set and a set which is not independent is said to be \emph{dependent}. A minimal dependent set is called a \emph{circuit}. Relevant for our purposes are the following general aspects: (a) (hereditary property) a subset of an independent set is also independent; (b) all bases have the same cardinality, called the \emph{rank} of the matroid. Further properties will be introduced in context, as needed.

In this paper we encounter two types of matroids: a \emph{graphic rigidity matroid}, defined on a ground set given by all the edges $E_n:=\{ij: 1\leq i < j \leq n\}$ of the complete graph $K_n$; this is the \emph{$(2,3)$-sparsity matroid} or the \emph{generic rigidity matroid} described below; 
and an \emph{algebraic matroid},  defined on an isomorphic ground set of variables $X_n:=\{x_{ij}: 1\leq i < j \leq n\}$; this is the \emph{algebraic rigidity matroid associated to the Cayley-Menger ideal} that will be defined in \cref{sec:prelimAlgebraic}. Two examples of (spanning) circuits in the (graphic) rigidity matroid on $6$ vertices were given in \cref{fig:desarguesAndK33}.

\subparagraph{The $(2,3)$-sparsity matroid: independent sets, bases, circuits.}
The $(2,3)$-sparse graphs on $n$ vertices form the collection of independent sets for a matroid $\smat{n}$ on the ground set $E$ of edges of the complete graph $K_n$, called the (generic) \emph{2D rigidity matroid}, or the \emph{$(2,3)$-sparsity matroid} \cite{graverServatiusServatius}. The bases of the matroid $\smat n$ are the maximal independent sets, hence the Laman graphs. A set of edges which is not sparse is a \emph{dependent} set. A \emph{minimal dependent set} is a (sparsity) \emph{circuit}. The edges of a circuit span a subset of the vertices of $V$. For instance, adding one edge to a Laman graph creates a dependent set of $2n-2$ edges, called a Laman-plus-one graph (\cref{fig:lamanAndCircuits}), which contains a unique circuit. A circuit spanning $V$ is said to be a \emph{maximal} or \emph{spanning} circuit in the sparsity matroid $\smat n$; it has $2n-2$ edges on $n$ vertices and satisfies the Laman $(2,3)$-sparsity property on all strict subsets of $n'<n$ vertices. Examples are given in \cref{fig:lamanAndCircuits}.



A \emph{dependent rigid graph} $G$ is a dependent graph which contains a spanning Laman graph; for example, the three rightmost graphs in \cref{fig:lamanAndCircuits}. An edge $e\in G$ is said to be \emph{redundant} if the graph $G\setminus \{ e \}$ is still rigid, otherwise the edge is said to be \emph{critical}: its removal makes the graph \emph{flexible}. It is well known that (a) all the edges of a circuit are redundant and that (b) a redundant edge is contained in a subgraph that spans a circuit (see e.g. \cite{jackson:jordan:connectedRigMatroids:2005}). Examples of dependent graphs with critical edges are those Laman-plus-one graphs which are not themselves spanning circuits: each one contains a unique, strict subgraph which is a circuit, and all the edges that are not in this circuit are critical (\cref{fig:lamanAndCircuits}). We'll make use of these concepts in \cref{sec:extendedCombRes}.

\subparagraph{Combining graphs and circuits: 2-sums and combinatorial resultants.} We define now operations that combine two graphs (with some common vertices and edges) into one. If $G_1$ and $G_2$ are two graphs, we use a consistent notation for their number of vertices and edges $n_i=|V(G_i)|$, $m_i=|E(G_i)|$, $i=1,2$, and for their union and intersection of vertices and edges, as in $V_{\cup}=V(G_1)\cup V(G_2)$, $V_{\cap}=V(G_1)\cap V(G_2)$, $n_{\cup}=|V_{\cup}|$, $n_{\cap}=|V_{\cap}|$ and similarly for edges, with $m_{\cup}=|E_{\cup}|$ and $m_{\cap}=|E_{\cap}|$. The \emph{common subgraph} of two graphs $G_1$ and $G_2$ is $G_{\cap} = (V_{\cap}, E_{\cap})$.

Let $G_1$ and $G_2$ be two graphs having exactly two vertices $u, v\in V_{\cap} $ and one edge $uv \in E_{\cap}$ in common. Their {\bf $2$-sum} is the graph $G=(V,E)$ with $V=V_{\cup}$ and $E=E_{\cup} \setminus \{uv\}$. The inverse operation of splitting $G$ into $G_1$ and $G_2$ is called a $2$-split or $2$-separation. It is known \cite{BergJordan} that the $2$-sum of two circuits is a circuit, and the 2-split of a circuit is a pair of circuits. This operation, used for the Tutte decomposition of a $2$-connected graph into $3$-connected components, allows us to focus (when computing circuits and circuit polynomials) on the more challenging case of $3$-connected graphs.

Let $G_1$ and $G_2$ be two distinct graphs with non-empty intersection $E_{\cap} \neq\emptyset$ and let $e\in E_{\cap}$ be a common edge. The {\bf combinatorial resultant} \cite{malic:streinu:CombRes:socg:2021} of $G_1$ and $G_2$  on the \emph{elimination edge} $e$ is the graph $\cres{G_1}{G_2}{e}$ with vertex set $V_{\cup}$ and edge set $E_{\cup}\setminus\{e\}$. The $2$-sum is the simplest kind of combinatorial resultant.

\begin{figure}[ht]
	\centering
	\includegraphics[width=0.4\textwidth]{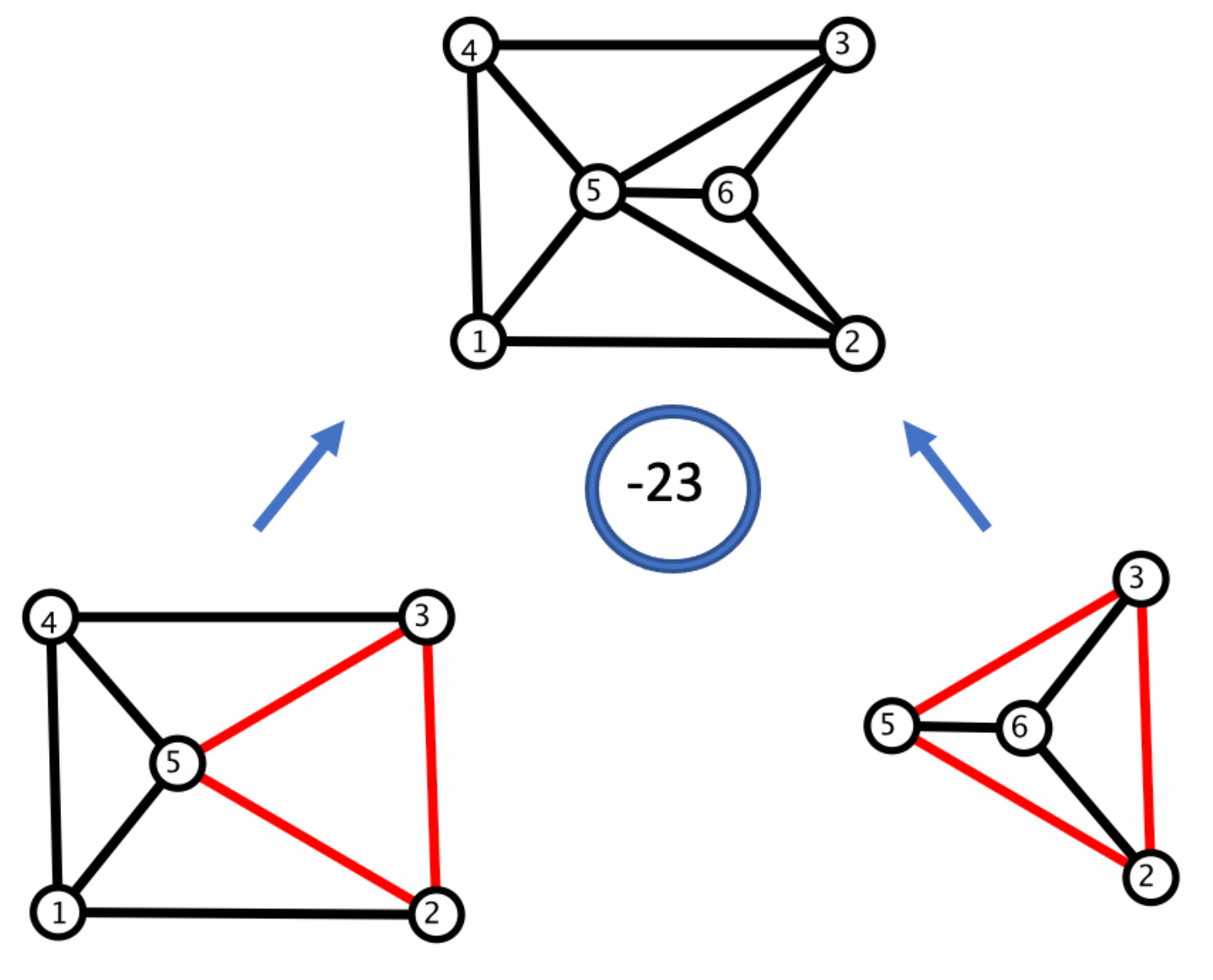}
	\caption{(Bottom) A $4$-wheel $W_4$ and a complete $K_4$ graph, with their common  Laman graph (a triangle) shown in red, the elimination edge (in a circle) and (Top) their combinatorial resultant, the $5$-wheel $W_5$ circuit.}
	\vspace{-10pt}
	\label{fig:combResultant}
\end{figure}

\noindent
\subparagraph{Circuit Resultant Trees. } In \cite{malic:streinu:CombRes:socg:2021}, the focus was on combinatorial resultants that produce circuits from circuits. It was proven there that the combinatorial resultant of two circuits has $m=2n-2$ edges iff the common subgraph $G_{\cap}$ of the two circuits is Laman, in which case the two circuits are said to be \emph{properly intersecting}. If the combinatorial resultant operation applied to two properly intersecting circuits results in a spanning circuit, we say that it is \emph{circuit-valid} (as in \cref{fig:combResultant}). It was further shown in \cite{malic:streinu:CombRes:socg:2021} that each circuit can be obtained via circuit-valid combinatorial resultant operations from a collection of $K_4$ circuits on subsets of $4$ vertices whose union is the span of the desired circuit. This construction (illustrated in \cref{fig:resTree} for the $K{3,3}$-plus-one circuit) is captured by a tree (technically, an \emph{algebraic expression tree}, where the operator at each internal node is a combinatorial resultant). It has the following properties: (a) all its nodes are labeled by circuits, with $K_4$ graphs at the leaves; (b) each internal node corresponds to a combinatorial resultant operation that combines the circuits of the node's two children and eliminates the edge marked under the node. An additional property of the tree produced by the method of \cite{malic:streinu:CombRes:socg:2021} is that, when combining two circuits with the resultant operation, it always adds at least one new vertex.  Hence the depth of the tree is at most $n-4$; it is exactly $2$ in \cref{fig:resTree} for a circuit with $n=6$ vertices. 

This type of tree was called in \cite{malic:streinu:CombRes:socg:2021} a \emph{combinatorial} resultant tree. For a more descriptive terminology, in this paper we refer to this object with an additional qualifier as a \emph{circuit (combinatorial) resultant tree}, reserving the \emph{combinatorial resultant tree} for the more general structure defined in the next section. 






\begin{figure}[ht]
	\centering
		\includegraphics[width=.8\textwidth]{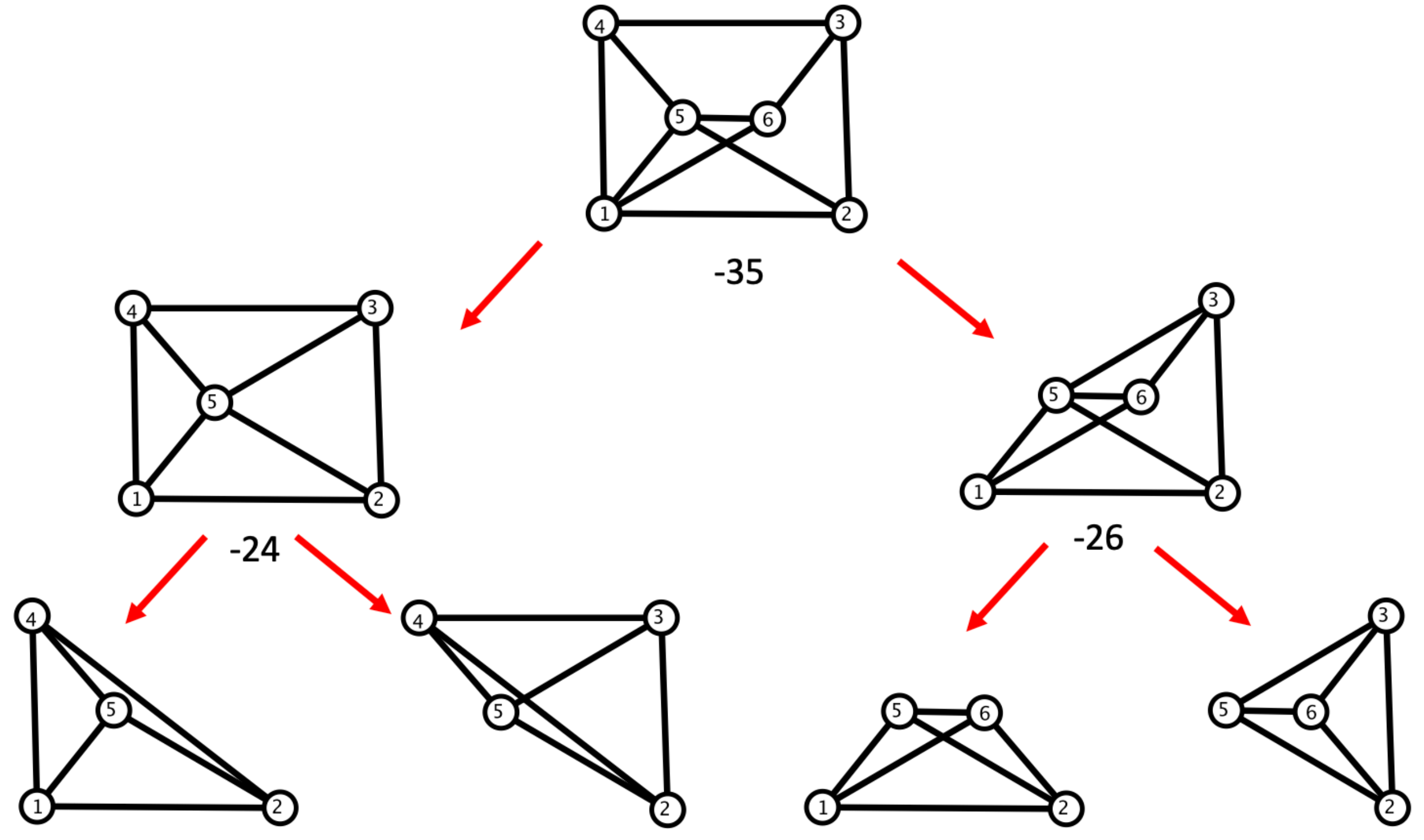}
	\caption{
	A combinatorial circuit resultant tree for $K_{3,3}$-plus-one: all of its nodes are circuits. }
	\label{fig:resTree}
\end{figure}

\section{Combinatorial Resultant Trees.}
\label{sec:extendedCombRes}

We relax now some of the constraints imposed on the resultant tree by the construction from \cite{malic:streinu:CombRes:socg:2021}. The internal nodes correspond, as before, to combinatorial resultant operations, but: (a) they are no longer restricted to be applied only on circuits or to produce only circuits; (b) the leaves can be labeled by graphs other than $K_4$'s, and (c) the sequence of graphs on the nodes along a path from a leaf to the root is no longer restricted to be strictly monotonely increasing in terms of the graphs' vertex sets. 

\begin{definition}
	\label{def:generators}
	A finite collection $Gen$ of\emph{dependent graphs} such that $K_4 \in Gen$ will be called a \emph{set of generators}. 
\end{definition}

The generators will be the graphs allowed to label the leaves. For the purpose of generating (combinatorial) circuits and computing (algebraic) circuit polynomials, meaningful sets of generators will be discussed in \cref{sec:generatorsCM}. We restrict the generators to  graphs which are \emph{dependent} in the rigidity matroid because these correspond precisely to the supports of polynomials in the Cayley-Menger ideal, as we will see in \cref{sec:prelimAlgebraic,sec:generatorsCM}.

\begin{definition}
	\label{def:resultantTree}
	A \emph{combinatorial resultant tree} with generators in $Gen$ is a finite binary tree such that: (a) its leaves are labeled with graphs from $Gen$, and (b) each internal node $i$ (marked with a graph $G_i$ and an edge $e_i\not\in G_i$) corresponds to a combinatorial resultant operation applied on the two graphs $G_1, G_2$ labeling its children. Specifically,  $G_i=\cres{G_1}{G_2}{e_i}$, where the edge $e_i\in G_1\cap G_2$.
\end{definition}

An example is illustrated in \cref{fig:resTreeK33}.

\begin{figure}[ht]
	\centering
		\includegraphics[width=.8\textwidth]{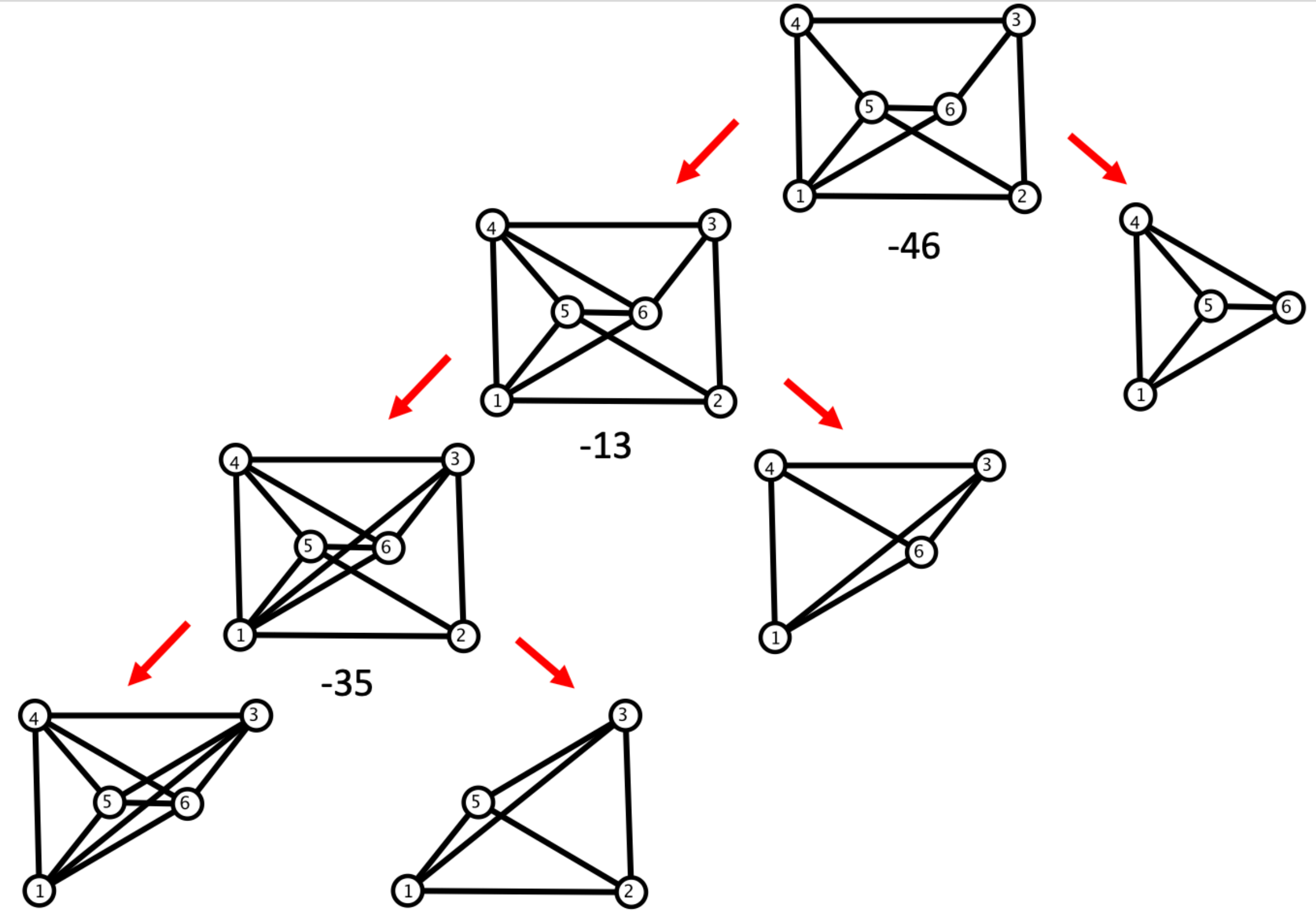}
	\caption{
	A (generalized) \emph{combinatorial resultant tree} for the $K_{3,3}$-plus-one circuit: its leftmost leaf and the two internal nodes along the leftmost path to the root are labeled with rigid dependent graphs which are \emph{not} circuits. 
	}
	\label{fig:resTreeK33}
\end{figure}

\begin{lemma}
	\label{lem:dependent}
	If the generators \emph{Gen} are dependent graphs (in the rigidity matroid), then all the graphs labeling the nodes (internal, not just the leaves) of a combinatorial resultant tree are also \emph{dependent}.
\end{lemma}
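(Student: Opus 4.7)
The plan is to proceed by induction on the depth of the combinatorial resultant tree. The base case is immediate: a depth-$0$ tree is a single leaf, labeled by a graph in $Gen$, which is dependent by hypothesis.

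For the inductive step, consider an internal node $v$ labeled $G = \cres{G_1}{G_2}{e}$ whose two children are labeled by $G_1, G_2$. By the inductive hypothesis, both $G_1$ and $G_2$ are dependent in the rigidity matroid, so each contains a circuit, say $C_1 \subseteq G_1$ and $C_2 \subseteq G_2$. The goal is to exhibit a circuit contained in $G = (E(G_1) \cup E(G_2)) \setminus \{e\}$, which will certify that $G$ is dependent.

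The argument splits according to where the eliminated edge $e$ sits relative to these circuits. If either $C_1$ or $C_2$ avoids $e$, then that circuit is already a subgraph of $G$, and we are done immediately. Otherwise $e \in C_1 \cap C_2$; whenever $C_1$ and $C_2$ can be chosen to be distinct circuits of the rigidity matroid through $e$, the standard matroid circuit elimination axiom supplies a third circuit $C_3 \subseteq (C_1 \cup C_2) \setminus \{e\} \subseteq G$, and dependence of $G$ follows.

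The main obstacle is the genuinely degenerate subcase in which both $G_1$ and $G_2$ have a \emph{single} circuit $C$ through $e$, forced to coincide, so that the elimination axiom cannot be applied directly. I would handle this by a submodular rank bound, $\rank(G_1 \cup G_2) \le \rank(G_1) + \rank(G_2) - \rank(G_1 \cap G_2)$, combined with the strict rank deficits $\rank(G_i) \le |E(G_i)| - 1$ available for dependent $G_i$, in order to deduce $\rank(G) < |E(G)|$ whenever the common intersection $G_1 \cap G_2$ is $(2,3)$-sparse. This latter structural condition is precisely what is enforced by the way the tree is built in the algorithmic sections of the paper, so the invariant propagates cleanly up to the root.
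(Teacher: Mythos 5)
Your Cases A and B reproduce the paper's two-case split in a cleaner, more explicitly matroidal form: the paper's Case~2 (the eliminated edge $e$ is a bridge in one of $G_1,G_2$, so a circuit avoiding $e$ survives) is your Case~A, and the paper's Case~1 (circuits $C_1\ni e$ in $G_1$, $C_2\ni e$ in $G_2$) is your Case~B, except that the paper argues by counting edges of $C_1\cup C_2\setminus e$ against the Laman bound rather than invoking circuit elimination. Both routes are fine, and you are right to flag that both silently require $C_1\neq C_2$: the paper's phrase ``$C_1\cap C_2$ cannot be dependent by minimality of circuits'' is simply false when $C_1=C_2$, so the paper has the same gap you noticed.

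Where your proposal goes wrong is the fix. If the degenerate case really occurs — every circuit of $G_1$ through $e$ and every circuit of $G_2$ through $e$ coincide with one circuit $C$ — then necessarily $C\subseteq E(G_1)\cap E(G_2)$, i.e.\ the common subgraph is \emph{not} $(2,3)$-sparse. Your submodular bound, however, only yields $\rank(G) < |E(G)|$ after substituting $\rank(G_1\cap G_2)=|E(G_1\cap G_2)|$, which \emph{requires} the intersection to be independent. So the condition under which your fallback works and the condition that puts you in the degenerate case are mutually exclusive, and the fallback never fires when it is supposed to. The correct version of the observation is the converse: if $E(G_1)\cap E(G_2)$ is independent then the degenerate case cannot arise (a shared circuit would sit inside the intersection), so Case~B already covers it. If the intersection is dependent, the lemma as stated can actually fail: take $G_1=K_4\cup\{45\}$, $G_2=K_4\cup\{46\}$ on a common $K_4$ with two different pendant edges, and let $e$ be any edge of the $K_4$; then $\cres{G_1}{G_2}{e}$ is $(2,3)$-sparse. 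Also note your claim that the sparsity of $G_1\cap G_2$ ``is precisely what is enforced by the way the tree is built'' does not hold for the generalized trees of \cref{def:resultantTree} (that constraint was part of ``properly intersecting'' circuits in the earlier, circuit-only construction, and is deliberately dropped here). So the honest conclusion is that both your proof and the paper's need an explicit hypothesis that the common subgraph of the two children is independent.
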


\begin{proof}

The proof is an induction on the tree nodes, with the base cases at the root. For the inductive step,  assume that $G_1$ and $G_2$ are the dependent graphs labeling the two children of a node labeled with $G=CRes(G_1,G_2,e)$, where $e\in E_{\cap}$ is an edge in the common intersection $G_{\cap}$. We consider two cases, depending on whether $e$ is critical in none or in at least one of $G_1$ and $G_2$. In each case, we identify a subset of the combinatorial resultant graph $G$ which violates Laman's property, hence we'll conclude that the entire graph $G$ is dependent.
	
{\bf Case 1: $e$ is redundant in both $G_1$ and $G_2$.} This means that there exist subsets of edges $C_1\subset G_1$ and $C_2\subset G_2$, both containing the edge $e$, which are circuits (their individual spanned-vertex sets may possibly contain additional edges, but this only makes it easier to reach our desired conclusion). Their intersection $C_1\cap C_2$ cannot be dependent (by the minimality of circuits). Hence their union, with edge $e$ eliminated, has at least $2n_{\cup}-2$ edges (cf. the proof of Lemma 3.1. in \cite{malic:streinu:CombRes:socg:2021}), hence it is dependent. 

{\bf Case 2: $e$ is critical either in $G_1$ or in $G_2$.} Let's assume it is critical in $G_1$. Since $G_1$ is dependent and $e\in G_1$ is critical, it means that the removal of $e$ from $G_1$ creates a flexible graph which is still dependent. As a flexible graph, it splits into edge-disjoint rigid components; in this case, at least one of these components $R$ is dependent. Then, since the removal of $e$ does not affect $R$,  $R$ remains dependent in the resultant graph $G=CRes(G_1,G_2,e)$.
\end{proof}

\begin{definition}
	\label{def:rdepResultantTree}
	Given a circuit $C$, a\emph{valid combinatorial resultant tree} for $C$ is a combinatorial resultant tree with root $C$ and whose leaves (and hence nodes) are dependent graphs.
\end{definition}

The example in \cref{fig:resTreeK33} is a valid combinatorial resultant tree for the $K_{3,3}$-plus-one circuit. After reviewing the necessary algebraic notions in the next section, we will use it in \cref{sec:computingK33} to demonstrate our extended algebraic elimination algorithm, which will be then described in \cref{sec:algebraicNew}.

\section{Preliminaries: The Cayley-Menger Ideal and Algebraic Matroid.}
\label{sec:prelimAlgebraic}

We turn now to the algebraic aspects of our problem. For full details, the reader may consult the comprehensive sections 2-6 of \cite{malic:streinu:CombRes:arxiv:2021}.



\subparagraph{Polynomial ideals.}  Let $X$ be an arbitrary set of variables. A set of polynomials $I \subset\q[X]$ is an \emph{ideal of} $\q[X]$ if it is closed under addition and multiplication by elements of $\q[X]$. 
A \emph{generating set} for an ideal is a set $S\subset \q[X]$ of polynomials such that every polynomial in the ideal is an algebraic combination of elements in $S$ with coefficients in $\q[X]$. 
Ideals generated by a single polynomial are called \emph{principal}. An ideal $I$ is said to be \emph{prime} if, whenever $fg\in I$, then either $f\in I$ or $g\in I$. A polynomial is \emph{irreducible} (over $\q$) if it cannot be decomposed into a product of non-constant polynomials in $\q[X]$. A principal ideal is prime iff it is generated by an irreducible polynomial. However, an ideal generated by two or more irreducible polynomials is not necessarily prime.

\subparagraph{Elimination ideals.} If $I$ is an ideal of $\mathbb Q[X]$ and $X'\subset X$ non-empty, then the \emph{elimination ideal} of $I$ with respect to $X'$ is the ideal $I\cap \mathbb Q[X']$ of the ring $\mathbb Q[X']$.
Elimination ideals frequently appear in the context of \grobner{} bases \cite{Buchberger, CoxLittleOshea} which give a general approach for computing elimination ideals: if $\mathcal G$ is a \grobner{} basis for $I$ with respect to an \emph{elimination order}, 
 e.g.\ 
 $x_{i_1}>x_{i_2}>\dots>x_{i_n}$, then the elimination ideal $I\cap \q[x_{i_{k+1}},\dots,x_{i_n}]$ which eliminates the first $k$ indeterminates from $I$ in the specified order has $\mathcal G\cap \q[x_{i_{k+1}},\dots,x_{i_n}]$ as its \grobner{} basis.
%
%
	If $I$ is a prime ideal of $\q[X]$ and $X'\subset X$ is non-empty,
then the elimination ideal $I\cap \q[X']$ is prime.
%

%
%
%
%

\subparagraph{Algebraic matroid of a prime ideal.}
\label{subsection:AlgMatPrime} 
Intuitively, a collection of variables $X'$ is \emph{independent} with respect to an ideal $I\subset \q[X]$ if it is not constrained by any polynomial in $I$, and \emph{dependent} otherwise. The \emph{algebraic matroid}  induced by the ideal is, informally, a matroid on the ground set of variables $X$ whose independent sets are subsets of variables that are \emph{not} supported by any polynomial in the ideal.  Its \emph{dependent sets} are supports of polynomials in the ideal.


\subparagraph{Circuits and circuit polynomials.}
\label{sec:circuit polys}
A \emph{circuit} in a matroid is a minimal dependent set. In an algebraic matroid on ground set $X$, a circuit $C\subset X$ is a minimal set of variables supported by a polynomial in the prime ideal $I$ defining the matroid. A polynomial whose support is a circuit $C$ is called a \emph{circuit polynomial} and is denoted by $p_C$. A theorem of Dress and Lovasz  \cite{dressLovasz} states that, up to multiplication by a constant, \emph{a circuit polynomial $p_C$ is the unique polynomial in the ideal with the given support $C\subset X$}. We'll just say, shortly, that it is \emph{unique}. Furthermore, the circuit polynomial is \emph{irreducible}. In summary, \emph{circuit polynomials generate elimination ideals supported on circuits.}


\subparagraph{The Cayley-Menger ideal.}
\label{sec:prelimCMideal}
%
%
When working with the Cayley-Menger ideal we use variables $X_n = \{x_{i,j}: 1\leq i<j\leq n\}$ for unknown squared distances between pairs of points. 
%
The \emph{distance matrix} of $n$ labeled points is the matrix of squared distances between pairs of points. The \emph{Cayley matrix} is the distance matrix bordered by a new row and column of 1's, with zeros on the diagonal. 

\vspace{-14pt}
\begin{center}
$$
\begin{pmatrix}
	0 & 1 & 1 & 1 & \cdots & 1\\
	1 & 0 & x_{1,2} & x_{1,3} & \cdots & x_{1,n}\\
	1 & x_{1,2} & 0 & x_{2,3} & \cdots & x_{2,n}\\
	1 & x_{1,3} & x_{2,3} & 0 & \cdots & x_{3,n}\\
	\vdots & \vdots & \vdots &\vdots &\ddots &\vdots\\
	1 & x_{1,n} & x_{2,n} & x_{3,n} & \cdots & 0
\end{pmatrix}
$$
\end{center}

\noindent
Cayley's Theorem says that, if the distances come from a point set in the Euclidean space $\reals^d$, then the rank of this matrix must be at most $d+2$. Thus all the $(d+3)\times(d+3)$ minors of the Cayley-Menger matrix should be zero.  These minors induce polynomials in $\q[X_n]$ and are the \emph{standard generators} of the $(n,d)$-Cayley-Menger ideal $\cm{n}^d$. They are \emph{homogeneous polynomials} with integer coefficients and are \emph{irreducible} over $\q$. The $(n,d)$-Cayley-Menger ideal is a \emph{prime ideal} of dimension $dn-{\binom{d+1}{2}}$ \cite{borcea:cayleyMengerVariety:2002, Giambelli, HarrisTu, JozefiakLascouxPragacz} and codimension $\binom{n}{2}-dn+{\binom{d+1}{2}}$. In this paper we work with $d=2$ and denote by $\cm{n}$ the $(n,2)$-Cayley-Menger ideal. Its generators are tabulated in \cref{sec:generatorsCM}. 
%
The \emph{algebraic matroid $\amat{\cm{n}^d}$ of the Cayley-Menger ideal} is the matroid on the ground set $X_n$, where a subset of distance variables $X\subseteq X_n$ is independent if $\cm{n}^d\cap~\q[X]=\{0\}$, i.e. $X$ supports no polynomial in the ideal.
%
%
	The rank of $\amat{\cm{n}^d}$ is equal to $\dim{\cm{n}^d}=dn-{\binom{d+1}{2}}$. In particular, the rank of $\cm{n}$ is the Laman number $2n-3$. In fact, the algebraic matroid $\amat{\cm{n}^d}$ is \emph{isomorphic} to the (combinatorial) rigidity matroid introduced in \cref{sec:prelimCombinatorial}. The isomorphism maps Cayley variables $x_{i,j}$ to edges $ij$, thus the \emph{support} of a polynomial in $\cm{n}$ is in one-to-one correspondence with a subgraph of the complete graph $K_n$. All polynomials in $\cm{n}$ are, by definition, dependent and their supports define dependent graphs in the rigidity matroid. The supports of \emph{circuit polynomials} are rigidity circuits.


\subparagraph{Resultants and elimination ideals.} 
\label{sec:prelimResultants}
%
%

%
The resultant\footnote{Historically, it has also been called the \emph{eliminant}.} is the determinant of the Sylvester matrix associated to the coefficients of two uni-variate polynomials, see e.g.\ \cite{CoxLittleOshea}. We skip here the technical details, retaining only the key property that the resultant of two multi-variate polynomials relative to a common variable $x$ is a new polynomial in the union of the variables of the two polynomials, minus $x$. If $R$ is a polynomial ring and $I$ is an ideal in $R[x]$ with $f,g\in I$, then $\res{f}{g}{x}$ is in the elimination ideal $I\cap R$ of the ring $R$. In the context of the Cayley-Menger ideal, the effect (on the supports of the involved polynomials) of taking a resultant is captured by the combinatorial resultant defined in \cref{sec:prelimCombinatorial}.

\section{Standard generators of the 2D Cayley-Menger ideal.}
\label{sec:generatorsCM}

The standard generators of the 2D Cayley-Menger ideal are the $5\times 5$ minors of the Cayley matrix. Each standard generator $g$ is identified with its support graph $G_g$. To motivate the possible choices for the family of graphs {\em Gen} for the {\em combinatorial resultant trees} defined in \cref{sec:extendedCombRes}, we now tabulate the support graphs of all standard generators, up to multiplication by an integer constant, relabeling and graph isomorphism. 

To find all such support graphs, it is sufficient to consider the set of all $5\times 5$ minors of $\cm{10}$. Using a computer algebra package we can verify that this set has 109 619 distinct minors, of which 106 637 have distinct support graphs. 
The IsomorphicGraphQ function of Mathematica was used to reduce them to the $14$ graph isomorphism classes, 11 of which being shown in \cref{fig:generatorGraphs}. The only two representatives with less than 6 vertices are $K_4$ and $K_5$. There are 3 isomorphism classes on 6, 7, 8 vertices (one is $K_6$), 2 on 9 and one on 10 vertices. The corresponding polynomials are, up to isomorphism (relabeling of variables induced by relabeling of the vertices), unique for the given support, with a few exceptions: for $K_5$, we found $3$ distinct (non-isomorphic) polynomials.

\begin{figure}[ht]
	\centering
	\includegraphics[width=0.33\textwidth]{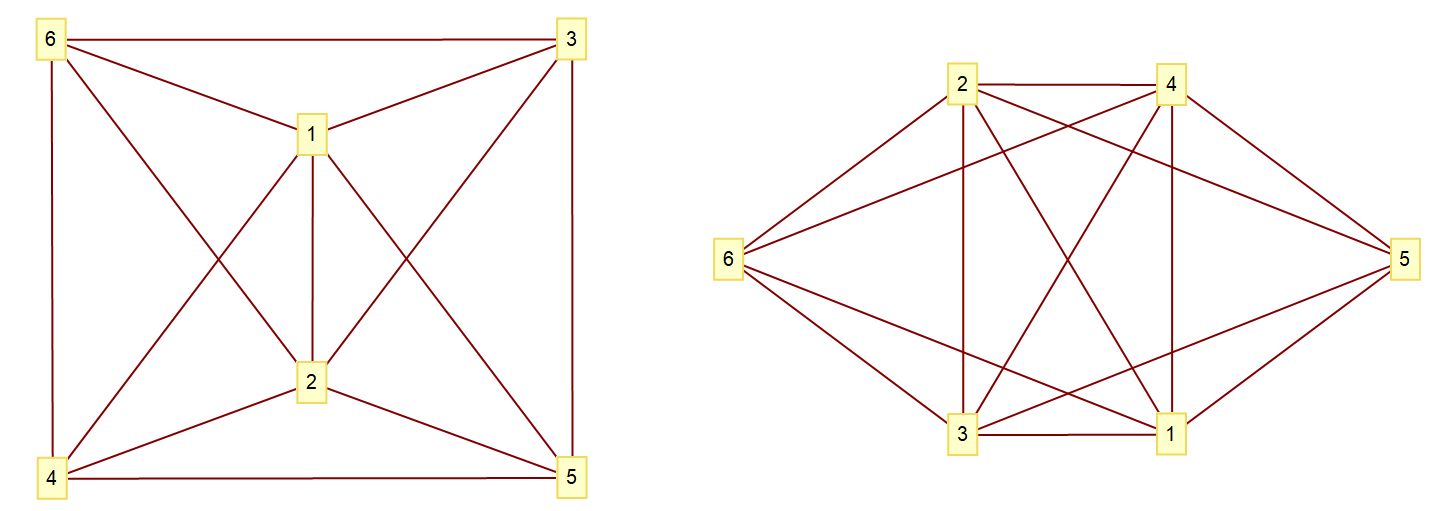}
	\includegraphics[width=0.45\textwidth]{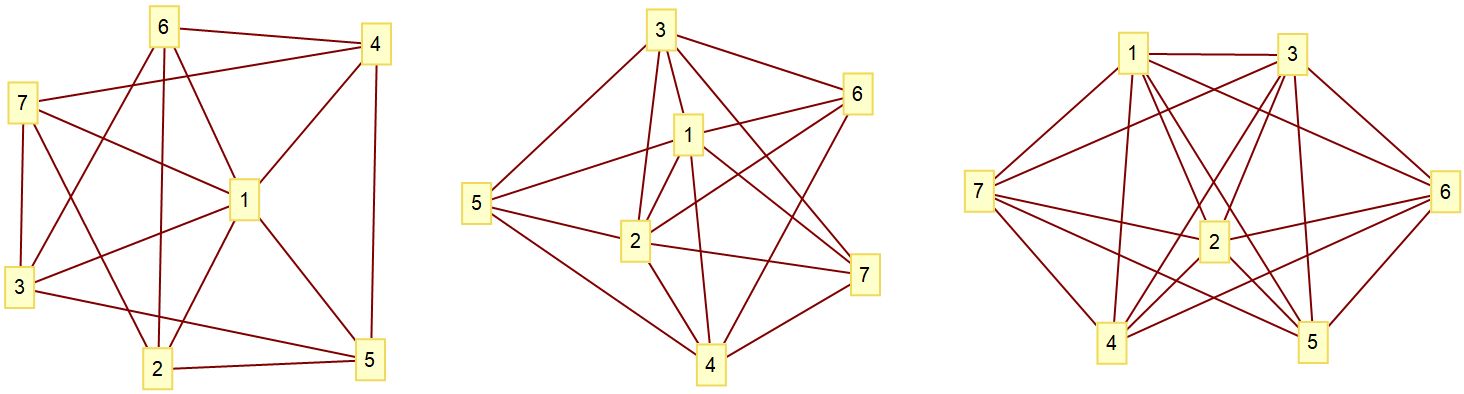}\\
	\includegraphics[width=0.45\textwidth]{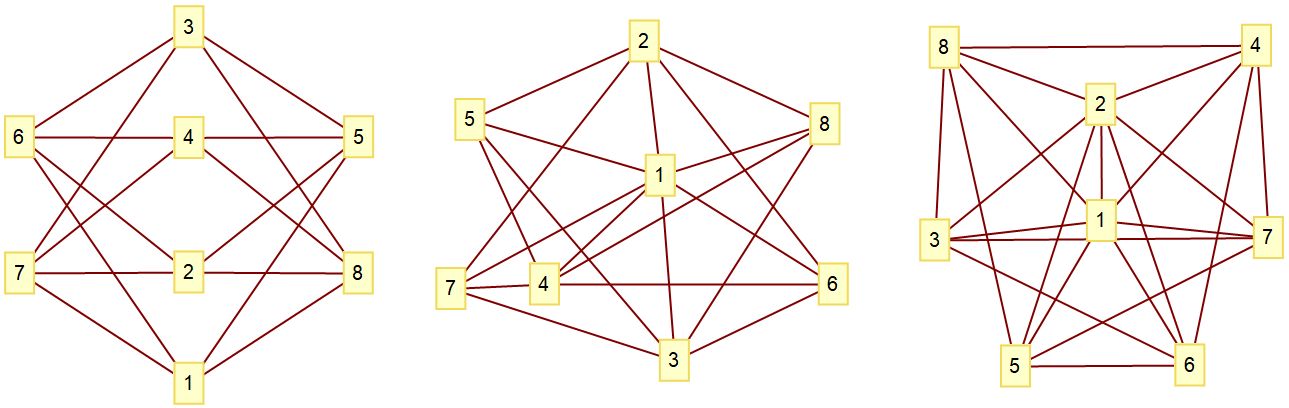}
	\includegraphics[width=0.3\textwidth]{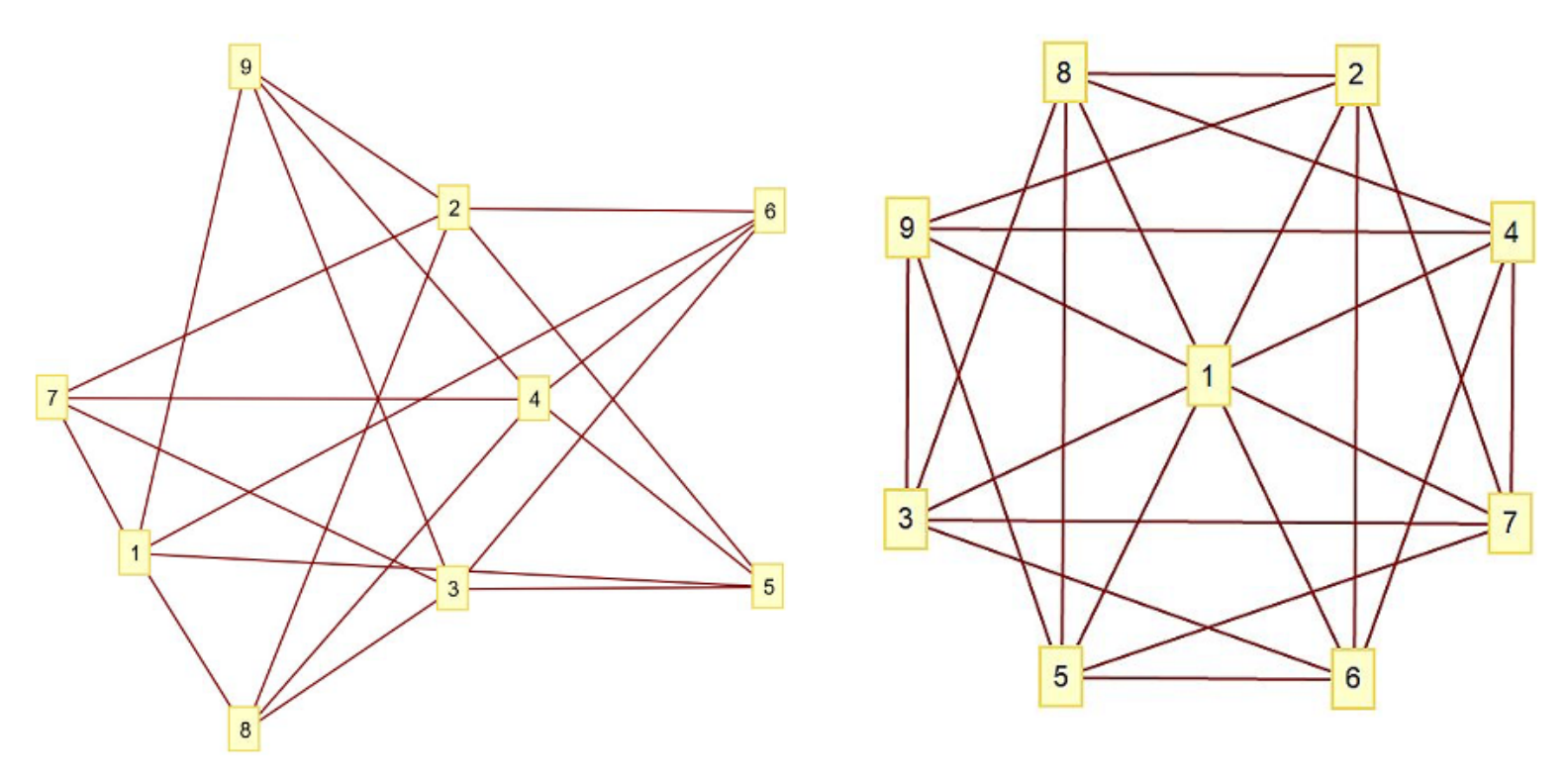}
	\includegraphics[width=.15\textwidth]{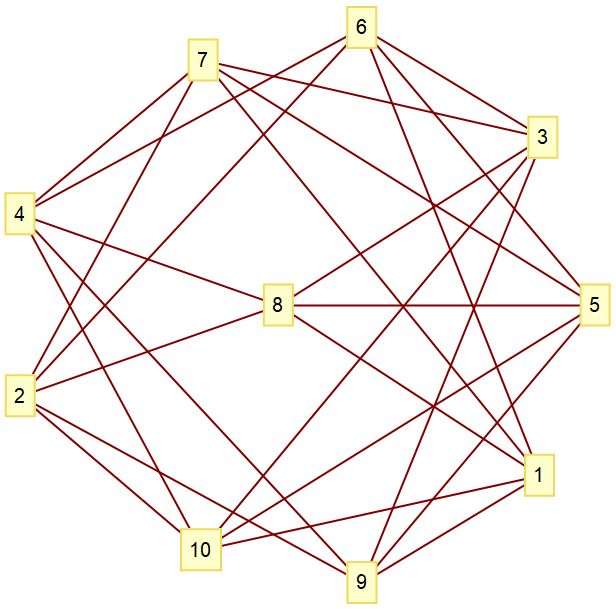}\\
	
	\label{fig:generatorGraphs}
	\caption{The $14$ graph isomorphism classes of Cayley-Menger generators consist in the three complete graphs $K_4, K_5$, $K_6$ and the 11 graphs on $6$ to $10$ vertices shown here.}
\end{figure}

\section{Example: the $K_{3,3}$-plus-one circuit polynomial.}
\label{sec:computingK33}

As a preview of the algorithm for computing circuit polynomials that will be given in \cref{sec:algebraicNew}, we demonstrate now how the algorithm will use the (extended) combinatorial resultant tree from \cref{fig:resTreeK33} to guide the computation of the circuit polynomial for the $K_{3,3}$-plus-one circuit shown at the root. 

At the leaves of the tree we are using irreducible polynomials from among the generators of the Cayley-Menger ideal. The polynomials corresponding to the nodes on the leftmost path from a leaf to the root are refered to, below, as $D_1$ (leftmost leaf), $D_2$ and $D_3$ (for the next two internal nodes with dependent graphs on them) and $C$ for the circuit polynomial at the root. The leaves on the right are three $K_4$ circuit polynomials: $C_1$ supported on vertices $\{ 1,2,3,5 \}$, $C_2$ supported on $\{ 1,3,4,6 \}$ and $C_3$ supported on $\{ 1,4,5,6 \}$. For the polynomial $D_1$ at the bottom leftmost leaf, supported by a dependent $K_5$ graph, we have used the generator:

\begin{align*}
&x_{15} x_{34}^2-x_{16} x_{34}^2-x_{56} x_{34}^2-x_{14} x_{35} x_{34}+x_{16} x_{35} x_{34}+x_{14} x_{36} x_{34}-2 x_{15} x_{36} x_{34}\\
&+x_{16} x_{36} x_{34}-x_{13} x_{45} x_{34}+x_{16} x_{45} x_{34}+x_{36} x_{45} x_{34}+x_{13} x_{46} x_{34}-2 x_{15} x_{46} x_{34}\\
&+x_{16} x_{46} x_{34}+x_{35} x_{46} x_{34}-2 x_{36} x_{46} x_{34}+x_{13} x_{56} x_{34}+x_{14} x_{56} x_{34}-2 x_{16} x_{56} x_{34}\\
&+x_{36} x_{56} x_{34}+x_{46} x_{56} x_{34}-x_{14} x_{36}^2+x_{15} x_{36}^2-x_{13} x_{46}^2+x_{15} x_{46}^2-x_{35} x_{46}^2+x_{14} x_{35} x_{36}\\
&-x_{16} x_{35} x_{36}-x_{36}^2 x_{45}+x_{13} x_{36} x_{45}-2 x_{14} x_{36} x_{45}+x_{16} x_{36} x_{45}-2 x_{13} x_{35} x_{46}+x_{14} x_{35} x_{46}\\
&+x_{16} x_{35} x_{46}+x_{13} x_{36} x_{46}+x_{14} x_{36} x_{46}-2 x_{15} x_{36} x_{46}+x_{35} x_{36} x_{46}+x_{13} x_{45} x_{46}\\
&-x_{16} x_{45} x_{46}+x_{36} x_{45} x_{46}-x_{13} x_{36} x_{56}+x_{14} x_{36} x_{56}+x_{13} x_{46} x_{56}-x_{14} x_{46} x_{56}
\end{align*}

The set of generators supported on $K_5$ contains more than this polynomial. There are two other available choices, of homogeneous degrees 4 or 5, which, in addition, can have quadratic degree in the elimination indeterminate $x_{35}$. The choice of this particular generator was done so as to minimize the complexity of (the computation of) the resultant: its homogeneous degree $3$ and degree $1$ in the elimination variable $x_{35}$ are minimal among the three available options. 

At the internal nodes of the tree we compute, using resultants and factorization, irreducible polynomials in the ideal whose support matches the dependent graphs of the combinatorial tree, as follows. 





The resultant $p_{D_2}=\res{g_{D_1}}{p_{C_1}}{x_{35}}$ is an irreducible polynomial supported on the graph $D_2$ in \cref{fig:resTreeK33}. This graph contains the final result $K_{3,3}$-plus-one as a subgraph, as well as two additional edges, which will have to be eliminated to obtain the final result. {\em Thus the resultant tree is not strictly increasing with respect to the set of vertices along a path, as was the case in \cite{malic:streinu:CombRes:socg:2021}.} However, when the set of vertices remains constant (as demonstrated with this example), the dependent graphs on the path towards the root are {\em strictly decreasing with respect to the edge set.}

The resultant $p_{D_3}=\res{p_{D_2}}{p_{C_2}}{x_{13}}$ is a reducible polynomial with 222108 terms and two non-constant irreducible factors. Only one of the factors is supported on $D_3$, with the other factor being supported on a minimally rigid (hence independent) graph. Thus this factor, the only one which can be in the CM ideal (and it must be, by primality considerations), is chosen as the new polynomial $p_{D_3}$  with which we continue the computation.

The final step to obtain $C$ is to eliminate the edge $46$ from $D_3$ by a combinatorial resultant with $C_3$. The corresponding resultant polynomial $p_{C}$ is a reducible polynomial with 15 197 960 terms and three irreducible factors. As in the previous step, the analysis of the supports of the irreducible factors shows that only one factor is supported on the $K_{3,3}$-plus-one circuit, while the other two factors are supported on minimally rigid graphs. This unique irreducible factor is the desired circuit polynomial for the $K_{3,3}$-plus-one circuit.

The computational time on AMD Ryzen 9 5950x CPU with 64GB of DDR4-3600 RAM in Mathematica v12.3, including reducibility checks and factorizations was 1788.65 seconds. The computation and factorization of the final resultant step took up most of the computational time (1023.7, resp.\ 748.056 seconds). The circuit polynomial for the $K_{3,3}$-plus-one circuit is stored in the compressed WDX Mathematica format (file size approx.\ 7MB) on the GitHub repository \cite{malic:streinu:GitHubRepo}.

\section{Algorithm: circuit polynomial from 
combinatorial resultant tree.}
\label{sec:algebraicNew}

We now have all the ingredients for describing the specificities of the new algorithm for computing a circuit polynomial from a given combinatorial resultant tree for a circuit $C$. Overall, just like the algorithm of \cite{malic:streinu:CombRes:socg:2021}, it computes resultants at each node of the tree, starting with the resultants of generators of $\cm{n}$ supported on leaf nodes. At the root node the circuit polynomial for $C$ is extracted from the irreducible factors of the resultant at the root. The main difference lies at the intermediate (non-root) nodes, as described in \cref{alg:resTreeAlg1} below. This is because the polynomials sought at non-leaf nodes, not being supported on circuits, are not necessarily irreducible polynomials {\em supported on the desired dependent graph} as was the case in \cite{malic:streinu:CombRes:socg:2021}. Hence, conceivably, they may have factors  that are not in the Cayley-Menger ideal, or there may be several choices of factors supported on the dependent graph, or a combination of these cases. It remains, however, as an open question (which may entail experimentation with gigantic polynomials) to explicitly find such examples (we did not find any so far) and to prove what may or may not happen.

\begin{algorithm}[ht]
	\caption{Computing a polynomial in the Cayley-Menger ideal supported on a node of a combinatorial resultant tree}
	\label{alg:resTreeAlg1}
	\textbf{Input}: Non-leaf node $G$ of a combinatorial resultant tree $T_C$. Polynomials $v,w\in\cm{n}$ supported on the child nodes of $G$.\\
	\textbf{Output}: Polynomial $p\in\cm{n}$ supported on $G$.
	\begin{algorithmic}[1]
		\State Compute the resultant $r=\res{v}{w}{x_e}$, where $x_e$ is the indeterminate to be eliminated according to $T_C$.
		\State Factorize $r$ over $\mathbb Q$.
		\If{$r$ has a unique irreducible factor $p$ in $\cm{n}$ supported on $G$}
		\State \Return $p$
		\ElsIf{$r$ factors as $p\cdot q$ such that $p$ is possibly a reducible polynomial in the Cayley-Menger ideal supported on $G$}\label{alg:factorStep}
		\State \Return $p$
		\EndIf
	\end{algorithmic}
\end{algorithm}


\subparagraph{Summary of differences from \cite{malic:streinu:CombRes:socg:2021}.} Since we do not require to have a circuit polynomial at each node, there may be a non-unique choice among the factors of the resultant $r$ that are in the Cayley-Menger ideal {\em and} supported on the rigid graph $G$. When $r$ factors as in step \ref{alg:factorStep}, the following can occur:
\begin{enumerate}
	\item $r$ is irreducible or it has a unique irreducible factor in the CM ideal supported on $G$.
	\item $r$ has 2 or more irreducible factors in the CM ideal supported on $G$. Hence we can choose $p$ to be any of the factors supported on $G$; in principle we choose the factor which will minimize the computational cost in the subsequent step. Heuristically this will be the factor with minimal degree in the indeterminate that is to be eliminated in the subsequent step since this will keep the dimension of the determinant to be computed as small as possible. If there is more than one such factor, we choose one with the least homogeneous degree.
	\item $r$ does not have an irreducible factor in the CM ideal supported on $G$. In this case, for $p$ we have to choose a reducible factor of $r$ that is in the CM ideal and supported on $G$. Such a reducible factor is not necessarily unique. If it is not unique, we choose one according to the same heuristics as in the previous case.
\end{enumerate}

\section{Concluding Remarks.}
\label{sec:concludingRemarks}

We have extended the algorithmic approach of \cite{malic:streinu:CombRes:socg:2021} for computing circuit polynomials. The additional structure in the Cayley-Menger ideal that we identified allows for a systematic calculation of polynomials that were previously unatainable with the only generally available method, the double-exponential Gr\"obner Basis algorithm, or with the previously, recently proposed method of \cite{malic:streinu:CombRes:socg:2021}. The new perspective on distance geometry problems offered by combinatorial resultants raises further directions of research and open questions, besides those stated by \cite{malic:streinu:CombRes:socg:2021}, whose answers may clarify the theoretical complexity of the algorithm.

We conclude the paper with a few such open problems concerning the combinatorial and algebraic structure of the (combinatorial and algebraic) resultant in the context of the Cayley-Menger ideal.

\begin{problem}{
	We work now with an extended collection of generators, not all of them circuits (such as those from \cref{sec:generatorsCM}). Given a circuit, decide if it has a combinatorial resultant tree with at least one non-$K_4$ leaf from the given generators.}
%
\end{problem}

\begin{problem} Consider an intermediate node $G$ in a combinatorial resultant tree and let $r=\res{v}{w}{x_e}$ be the resultant supported on $G$ with respect to the polynomials supported on the child nodes of $G$, as in Algorithm \ref{alg:resTreeAlg1}. Find examples where $r$ factors as $r=f\cdot g$, such that neither $f$ nor $g$ are supported on $G$ and are not necessarily irreducible, or prove that this never happens.
\end{problem}


%
%
%

\bibliography{references}

\end{document}